\documentclass[12pt,reqno]{amsart}
\usepackage{amssymb,amsmath}
\usepackage{algorithmic,algorithm}
\usepackage{amsfonts}
\usepackage{graphicx}
\usepackage{pstricks}
\usepackage{verbatim}
\theoremstyle{plain}
\numberwithin{equation}{section}
\newtheorem{thm}{Theorem}[section]
\newtheorem{theorem}[thm]{Theorem}
\newtheorem{lemma}[thm]{Lemma}

\newtheorem{definition}[thm]{Definition}

\newtheorem{corollary}[thm]{Corollary}
\newtheorem{remark}[thm]{Remark}

\calclayout
\begin{document}

\title[]{A Problem on the Largest Divisor $d$ \\
of $N$ with $d\leq \sqrt{N}$}
\author{Srikanth Cherukupally}
\thanks{Email Address: sricheru1214@gmail.com}
\email{sricheru1214@gmail.com}
\maketitle
\begin{abstract}
 For a given number $N$, we consider the problem of
 computing two integers $1\leq r,f < N$ such that
 the set
 \begin{displaymath}
 \mathcal{X}(N,r,f) = \{(a+b)-(f+\frac{Nr+1}{f}): ab=Nr\}
 \end{displaymath}
 consists only of positive integers. Computing a solution to the problem is equivalent to finding a
 pair $(r,f)$ satisfying $l(Nr) < f \leq l(Nr+1)$, where $l(x)$ is the largest divisor of $x$ bounded by $\sqrt{x}$. This requires factoring both $Nr$ and $Nr+1$. We present a simple randomized algorithm that - avoiding factoring - computes pairs $(r,f)$. We give an exact formula for the total number of possible pairs $(r,f)$, and with the aid of empirical data we estimate that the ratio
 $$\frac{\phi(N)-2}{|\mathfrak{F}(N)|}$$ is approximately about $c*\log \log N$. Here, $\mathfrak{F}(N)$ is the set of unique $r$ appearing among all possible pairs $(r,f)$, $\phi(.)$ is the Euler's totient function, and $c$ is a constant equal to 2 for prime $N$ and oscillates much for composite $N$.

 As a separate and independent case, we study the same problem of computing
 $(r,f)$ with $r>N$. We present a procedure to find such an $r$, which requires finding the least prime in an arithmetic progression.

 \end{abstract}
\section{Introduction}
\noindent We first note that the set $\mathcal{X}(N,r,f)$ contains integers only when $f$
is a divisor of $Nr+1$. Thus, \textit{whenever $\mathcal{X}(N,r,f)$  
is used, it is implicit that $f$ is a divisor of $Nr+1$.} \\

We use the following notation throughout the paper.  
\begin{itemize}
\item To denote that the set $\mathcal{X}(N,r,f)$ consists of only positive integers,
we use $[\mathcal{X}(N,r,f)]>0$, otherwise we use $[\mathcal{X}(N,r,f)]\not >0$, in which case the set contains
negative integers and may contain zero.
\item For integer $n\geq 1$, $l(n)$ denotes the 
largest divisor $d$ of $n$, with $d\leq \sqrt{n}$. \\ 
For example, $l(3^2.5)=5$, $l(2^2.3.5) = 6$. \\
For $n$ prime, $l(n)=1$. If $n$ is a perfect square, $l(n)=\sqrt{n}$.
\end{itemize}

We consider both combinatorial and algorithmic aspects concerning the set $\mathcal{X}(N,r,f)$.
The main results of the paper are presented below.

\subsection*{Algorithmic aspect.} We prove the fact that computing a solution to the defined problem, i.e., a pair $(r,f)$ requires finding a random $r$ that satisfies $l(Nr) < l(Nr+1)$. Finding such an $r$ requires factoring both $Nr$ and $Nr+1$. We present a randomized algorithm that - avoiding factoring - chooses a random $1\leq f < N$ and computes $r$ to find a pair $(r,f)$. For a prime $N$, the generated $(r,f)$ is always such that $[\mathcal{X}(N,r,f)]>0$. For a composite $N$, we prove the success rate of the
procedure, meaning for a generated pair $(r,f)$ we have $[\mathcal{X}(N,r,f)]>0$ with probability $> 0.75$.
The algorithm involves one modular inverse computation $\pmod N$, so
the running time is $O(\log^3 N)$.

As a separate and independent case, we study the same problem of computing
 $(r,f)$ with $r>N$. A brute-force method exists to find such an $r$, but requires finding the least prime in an arithmetic progression. From \cite{heath1, linnik1, linnik2}, the least prime in arithmetic progression
 $\pmod N$ is $O(N^{5.2})$. The brute-force method is not efficient, its complexity is exponential in $\log N$.

\subsection*{Combinatorial aspect.} For a given integer $N$, let $G(N)$ be the number of
distinct pairs $(r,f)$ such that $[\mathcal{X}(N,r,f)]>0$. We prove that
\begin{displaymath}
 |G(N)| \leq \phi(N) - 2*A(2,l(N)) - 2^u.
\end{displaymath}
Here, $u$ is the number of distinct prime factors of $N$, the function
$A(2,l(N))$ gives the number of integers in the interval $[2,l(N)]$ that are co-prime to $N$.
When $N$ is prime, $A(2,l(N))$ is zero, $2^u-2$ is zero, and thus $|G(N)| = \phi(N) - 2$.

Let $\mathfrak{F}(N)$  = $\{r: (r,f)\in G(N)\}$. From the empirical data, we have, for prime $N$,
\begin{displaymath}
  |\mathfrak{F}(N)| = \frac{\phi(N)-2}{2(\log\log(N) - C_N)}
\end{displaymath}
It is observed that $C_N$ increases with increasing $N$. Proving rigorously the observed behaviour of
$|\mathfrak{F}(N)|$ is left as an {\bf open issue}.

\section{An equivalent Condition for $[\mathcal{X}(N,r,f)]>0$}

For integer $k\geq 1$, let $D(k)$ be the set of divisors of $k$. 
Define a function $f: D(k) \rightarrow \mathbb{Q}$ as $f(x) = x+\frac{k}{x}$.  
Let $min_f(k) = min\{ f(x): x\in D(k)\}$.
\begin{lemma}
For $k\geq 1$, $min_f(k) = l(k) + \frac{k}{l(k)}$. 
\end{lemma}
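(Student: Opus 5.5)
The plan is to analyse the function $g(x) = x + \frac{k}{x}$ on the positive reals and then restrict to divisors, using the pairing $x \leftrightarrow k/x$ on $D(k)$.

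First, $g$ is symmetric under $x \mapsto k/x$, since $g(x) = g(k/x)$. Because $x \in D(k)$ exactly when $k/x \in D(k)$, the minimum of $g$ over $D(k)$ equals its minimum over $D(k)\cap[1,\sqrt{k}]$. This set is nonempty because it contains $1$.

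Second, $g$ is strictly decreasing on $(0,\sqrt{k}]$. Indeed, for $0 < x < y \le \sqrt{k}$,
\begin{displaymath}
g(x) - g(y) = (y-x)\left(\frac{k}{xy} - 1\right) > 0,
\end{displaymath}
since $xy < k$. Hence, among divisors $d \le \sqrt{k}$, the value $g(d)$ is smallest for the largest such $d$. By definition that divisor is $l(k)$, so $\min_f(k) = l(k) + \frac{k}{l(k)}$.

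This last step is where the definition of $l$ enters. The only mild points to check are the boundary cases. If $k$ is a perfect square, then $l(k) = \sqrt{k}$, and the inequality above still gives strict decrease up to and including $\sqrt{k}$. If $k = 1$, then $D(k)=\{1\}$ and the claim is immediate. No step should be a real obstacle; the argument consists of the symmetry reduction followed by monotonicity.
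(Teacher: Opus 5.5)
The paper states this lemma without proof and only invokes it in the proof of its main equivalence theorem, so there is no argument of the author's to compare against. Your proof is correct and fills that gap.

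The symmetry $g(d)=g(k/d)$ reduces the minimisation to divisors in $[1,\sqrt{k}]$. This uses the fact, implicit in your wording, that $d$ and $k/d$ cannot both exceed $\sqrt{k}$. Your identity $g(x)-g(y)=(y-x)\left(\frac{k}{xy}-1\right)$, together with $xy<y^2\le k$, gives strict decrease on $(0,\sqrt{k}]$. So the minimum is attained at the largest divisor not exceeding $\sqrt{k}$, which is $l(k)$ by definition.

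Your difference identity is essentially the algebra the paper uses later. Take $k=Nr$, $x=l(Nr)$, $y=f$, multiply by $f$, and subtract $1$ to account for $Nr+1$. This yields exactly the paper's factorisation $f\,T=(f-l(Nr))\bigl(\frac{Nr}{l(Nr)}-f\bigr)-1$. So your argument matches how the paper itself reasons about these sums.
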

\begin{lemma} 
 If one of $Nr$ and $Nr+1$ is a perfect square, then for any divisor $f$ of $Nr+1$,
 we have $[\mathcal{X}(N,r,f)]\not >0$
\end{lemma}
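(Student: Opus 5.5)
The plan is to exhibit, in each case, one explicit factorization $ab=Nr$ whose element of $\mathcal{X}(N,r,f)$ is $\le 0$, by comparing $a+b$ with the AM--GM bound for $f+\frac{Nr+1}{f}$. Lemma 2.1 is not really needed, although it explains the idea: $[\mathcal{X}(N,r,f)]>0$ means $min_f(Nr) > f+\frac{Nr+1}{f}$. Throughout, $f$ is a positive divisor of $Nr+1$, and the pairs $(a,b)$ range over positive integers with $ab=Nr$.

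The one inequality used is AM--GM: for any $f>0$ and $k\ge 1$,
\[
f+\frac{k}{f}\ \ge\ 2\sqrt{k},
\]
with equality exactly when $f=\sqrt{k}$. Apply it with $k=Nr+1$, for every divisor $f$ of $Nr+1$.

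\emph{Case 1: $Nr=m^2$ is a perfect square.} Choose $a=b=m$, so $a+b=2m$. By AM--GM,
\[
f+\frac{Nr+1}{f}\ \ge\ 2\sqrt{m^2+1}\ >\ 2m .
\]
Hence the element $(a+b)-\bigl(f+\frac{Nr+1}{f}\bigr)$ of $\mathcal{X}(N,r,f)$ is strictly negative.

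\emph{Case 2: $Nr+1=m^2$ is a perfect square.} Here $m\ge 2$, since $Nr\ge 1$. Then $Nr=m^2-1=(m-1)(m+1)$, so take $a=m-1$ and $b=m+1$. These are positive integers with $ab=Nr$ and $a+b=2m$. By AM--GM, $f+\frac{m^2}{f}\ge 2m$, so the corresponding element of $\mathcal{X}(N,r,f)$ is $\le 0$. It equals $0$ exactly when $f=m$, which is consistent with the convention that the set may contain zero.

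In both cases $\mathcal{X}(N,r,f)$ contains a non-positive integer, so $[\mathcal{X}(N,r,f)]\not>0$ for every divisor $f$ of $Nr+1$. There is no real obstacle. The only point needing care is Case 2, where one must spot the factorization $m^2-1=(m-1)(m+1)$ of $Nr$ rather than look for a square factorization.
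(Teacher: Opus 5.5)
Your proof is correct and follows the paper's argument. In each case you exhibit the factorization of $Nr$ with $a+b=2\sqrt{Nr}$ (resp.\ $2z$) and compare it with $f+\frac{Nr+1}{f}$. Your version is slightly tidier: you make explicit the factorization $Nr=(m-1)(m+1)$ that the paper uses only implicitly. You also apply AM--GM to every divisor $f$, instead of invoking the restrictions $f<\sqrt{Nr}$ and $f\le z$, which are unnecessary and fail for, e.g., $f=Nr+1$.
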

\proof{Clearly, only one of $Nr$ and $Nr+1$ can be a perfect square. We prove the result for both
cases. The arguments are similar in both cases. \\

Suppose $Nr$ is 
a perfect square. Then, for any divisor $f$ of $Nr+1$, 
$\mathcal{X}(N,r,f)$ consists of the element $t = 2\sqrt{Nr} - (f+\frac{Nr+1}{f})$. Since 
$f<\sqrt{Nr}$,  $f+\frac{Nr+1}{f}>2\sqrt{Nr}$. Thus $t<0$. \\ 

Suppose $Nr+1 = z^2$ for some integer $z>0$. Then, for any divisor $f$ of $Nr+1$, 
$\mathcal{X}(N,r,f)$ consists of the element $s = 2z - (f+\frac{Nr+1}{f})$. Since 
$f\leq z$,  $f+\frac{Nr+1}{f}\geq 2z$. Thus, $s\leq 0$. } \hfill $\Box$ 
\begin{theorem}
\label{main-thm}
Suppose $N$ and $r$ are two integers such that neither of $Nr$ and $Nr+1$ is a perfect square. Let 
$f$ be a divisor of $Nr+1$ with $1\leq f<\sqrt{Nr+1}$. Then, 
\begin{center}
$[\mathcal{X}(N,r,f)]>0$ if and only if $l(Nr)<f \leq l(Nr+1)$.  
\end{center}
\end{theorem}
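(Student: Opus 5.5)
Write $g(x)=x+\frac{c}{x}$ on $(0,\infty)$. The plan is to reduce positivity of $\mathcal{X}(N,r,f)$ to comparing two minima, then use that $g$ is strictly decreasing on $(0,\sqrt{c}\,]$.

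\emph{Reduction to a single inequality.} Since $\mathcal{X}(N,r,f)$ is the set of values $(a+\frac{Nr}{a})-(f+\frac{Nr+1}{f})$ over divisors $a$ of $Nr$, and the subtracted term does not depend on $a$, we have $[\mathcal{X}(N,r,f)]>0$ if and only if
\begin{displaymath}
min_f(Nr) > f+\frac{Nr+1}{f}.
\end{displaymath}
By the first lemma, $min_f(Nr)=l(Nr)+\frac{Nr}{l(Nr)}$. Write $m=l(Nr)$ and $n=l(Nr+1)$. The hypothesis that $Nr$ and $Nr+1$ are not squares gives $m<\sqrt{Nr}$ and $n<\sqrt{Nr+1}$. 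Also $f\le n$, because $f$ is a divisor of $Nr+1$ below $\sqrt{Nr+1}$ and $n$ is the largest such divisor. So the upper bound $f\le l(Nr+1)$ in the theorem holds automatically, and what must be shown is that the displayed inequality is equivalent to $m<f$.

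\emph{Direction $m<f \Rightarrow$ positivity.} Since $f-m$ is a positive integer, $f\ge m+1$. The map $x\mapsto x+\frac{Nr+1}{x}$ is decreasing on $(0,\sqrt{Nr+1}\,]$ and $f<\sqrt{Nr+1}$. Hence it suffices to check
\begin{displaymath}
m+\frac{Nr}{m} > (m+1)+\frac{Nr+1}{m+1},
\end{displaymath}
at least when $m+1\le\sqrt{Nr+1}$, and $m<\sqrt{Nr}$ guarantees this. After clearing denominators the inequality becomes
\begin{displaymath}
Nr(m+1)-m(Nr+1)>m(m+1), \quad\text{that is,}\quad Nr-m>m^2+m,
\end{displaymath}
or $Nr>m^2+2m$. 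Here I use integrality. Since $m<\sqrt{Nr}$ we have $m^2<Nr$. If $Nr\le m^2+2m$, then $m^2<Nr\le m^2+2m<(m+1)^2$, so $\sqrt{Nr}$ lies strictly between $m$ and $m+1$. That alone is no contradiction, and $Nr=m^2+m$ is not excluded: it would mean $m\mid Nr$ with cofactor $m+1$. So the needed bound reduces to an integrality argument of the form
\begin{displaymath}
\frac{Nr}{m}>m+1 .
\end{displaymath}
I expect this to be the main obstacle. The fallback is to compare directly with the actual value $f$ rather than with $m+1$: use that $f(Nr+1)$-type cross terms are integers, that $f\nmid Nr$ (since $\gcd(Nr,Nr+1)=1$ and $f>1$), and that $\frac{Nr}{m}\ge$ the next divisor of $Nr$ above $\sqrt{Nr}$. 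I would carefully handle edge cases, such as $Nr=m(m+1)$, where equality might occur.

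\emph{Direction positivity $\Rightarrow m<f$.} Argue by contrapositive: suppose $f\le m$. Since $f\le m<\sqrt{Nr}$, monotonicity of $x\mapsto x+\frac{Nr}{x}$ gives
\begin{displaymath}
f+\frac{Nr+1}{f}>f+\frac{Nr}{f}\ge m+\frac{Nr}{m}.
\end{displaymath}
Thus the element of $\mathcal{X}(N,r,f)$ coming from $a=m$ is negative, so $[\mathcal{X}(N,r,f)]\not>0$. This direction is straightforward.
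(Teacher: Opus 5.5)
Your reduction to the single inequality $m+\frac{Nr}{m}>f+\frac{Nr+1}{f}$ (with $m=l(Nr)$) is correct. So is the observation that $f\le l(Nr+1)$ is automatic, and so is the contrapositive argument for ``positivity $\Rightarrow m<f$''. The gap is the other direction, which is the real content of the theorem and which you leave unfinished. You correctly find that comparing $f$ with $m+1$ requires $Nr>m^2+2m$. You then recast this as an integrality bound ``$\frac{Nr}{m}>m+1$'', but the correct translation is $\frac{Nr}{m}>m+2$. No bound of this kind holds in general: it fails for $Nr=m(m+1)$, as you note yourself, and also for $Nr=m(m+2)$. Your fallback (``compare directly with $f$'', ``cross terms are integers'') is not yet an argument. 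Separately, the claim that $m<\sqrt{Nr}$ guarantees $m+1\le\sqrt{Nr+1}$ is false: $Nr=6$, $m=2$ gives $9>7$.

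The missing idea is that the hypothesis itself supplies the inequality, with no further arithmetic of $Nr$ needed. If $f$ is an integer with $m<f<\sqrt{Nr+1}$, then $m+1\le f$, so
\[
(m+1)^2\le f^2<Nr+1, \qquad\text{i.e.}\qquad Nr>m^2+2m .
\]
This is exactly what your cleared-denominator inequality requires, and it also gives $m+1\le\sqrt{Nr+1}$ for the monotonicity step. Your troublesome cases $Nr=m(m+1)$ and $Nr=m(m+2)$ satisfy $Nr+1\le(m+1)^2$, so no integer $f$ fits in $(m,\sqrt{Nr+1})$ and the implication is vacuous there. With this one line, your monotonicity route is complete.

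The paper instead works with the exact identity
\[
f\Big(m+\frac{Nr}{m}-f-\frac{Nr+1}{f}\Big)=(f-m)\Big(\frac{Nr}{m}-f\Big)-1 .
\]
For $f>m$ both factors are positive integers. Their product cannot be $1$, since that would force $f=m+1$ and $\frac{Nr}{m}=m+2$, i.e.\ $Nr+1=f^2$. So the right side is positive. For $f\le m$ the right side is at most $-1$, so the identity handles both directions at once. Your approach separates the two directions and shows that the forward one needs nothing beyond the strict bound $f<\sqrt{Nr+1}$.
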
 
\proof{For integers $A,B$ with $AB = Nr$, and $f$ a divisor of $Nr+1$, let 
$T(A,B,f) = A+B-(f+\frac{Nr+1}{f})$. The set $\mathcal{X}(N,r,f)$ consists of 
all elements $T(A,B,f)$ where $AB=Nr$. From Lemma 1, the least element of 
$\mathcal{X}(N,r,f)$ is $T(l(Nr), Nr/l(Nr),f)$. 
We show that 
 $T(l(Nr), Nr/l(Nr),f)>0$ if and only if $l(Nr)<f\leq l(Nr+1)$. This result is equivalent to 
 proving the claim.  Consider the product
\begin{eqnarray}
f\times T(l(Nr), Nr/l(Nr),f) & = & f \big[l(Nr) + \frac{Nr}{l(Nr)} - (f+\frac{Nr+1}{f})\big] \nonumber \\
               & = & (f-l(Nr))\bigg[\frac{Nr}{l(Nr)}-f\bigg]-1 \nonumber 
\end{eqnarray} 
Since both $Nr$, $Nr+1$ are not perfect squares, $f, l(Nr)<\sqrt{Nr}$. Thus, $\frac{Nr}{l(Nr)}-f>1$. 
From the above equality, if $f>l(Nr)$, then 
$T(l(Nr), Nr/l(Nr),f)>0$, and also $T(l(Nr), Nr/l(Nr),f)>0$ only when $f>l(Nr)$. This completes the 
argument. } \hfill $\Box$ \\

From Theorem \ref{main-thm}, we have the following corollary.
\begin{corollary}
 Computing a pair $(r,f)$ such that $[\mathcal{X}(N,r,f)]>0$
equivalent to computing integers $f,r$ such that $f$ is a divisor of $Nr+1$ with $l(Nr)<f\leq l(Nr+1)$.
\end{corollary}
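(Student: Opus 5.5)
The corollary follows from Lemma 2 and Theorem \ref{main-thm} by a case split on whether $Nr$ or $Nr+1$ is a perfect square, together with a short argument that we may always assume $f<\sqrt{Nr+1}$. Throughout, a pair $(r,f)$ is admissible only when $f$ divides $Nr+1$, as stipulated in the introduction.

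For the forward direction, suppose $[\mathcal{X}(N,r,f)]>0$. By Lemma 2, neither $Nr$ nor $Nr+1$ is a perfect square, since otherwise the set would contain a nonpositive element. In particular $f\neq\sqrt{Nr+1}$.

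The step I expect to need the most care is ruling out $f>\sqrt{Nr+1}$, because the theorem assumes $f<\sqrt{Nr+1}$. Replacing $f$ by $f'=(Nr+1)/f$ leaves $f+\frac{Nr+1}{f}$ unchanged, so $\mathcal{X}(N,r,f)=\mathcal{X}(N,r,f')$.
\begin{itemize}
\item If we adopt the convention that a pair is identified with its smaller cofactor, positivity transfers to $f'<\sqrt{Nr+1}$. Theorem \ref{main-thm} then gives $l(Nr)<f'\leq l(Nr+1)$.
\item If instead the statement must hold for $f$ itself, we observe that $f>\sqrt{Nr+1}>l(Nr+1)$. Then the condition $l(Nr)<f\le l(Nr+1)$ fails, so the equivalence must exclude this case.
\end{itemize}
We therefore read the corollary with $f$ ranging over divisors below $\sqrt{Nr+1}$, the range in which Theorem \ref{main-thm} was stated. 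With $f<\sqrt{Nr+1}$, Theorem \ref{main-thm} applies directly and yields $l(Nr)<f\le l(Nr+1)$.

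For the converse, suppose $f$ divides $Nr+1$ and $l(Nr)<f\le l(Nr+1)$. We first check that neither number is a square, so that Theorem \ref{main-thm} applies.
\begin{itemize}
\item If $Nr=m^2$, then $l(Nr)=m$, so $f>m$. But $f\le l(Nr+1)\le\sqrt{m^2+1}<m+1$, which is impossible for an integer $f$.
\item If $Nr+1=z^2$, then $l(Nr+1)=z$. Since $Nr=z^2-1=(z-1)(z+1)$, we get $l(Nr)\ge z-1$, which forces $f=z$. Here Lemma 2 shows positivity fails. So the converse needs the square case excluded, or the hypothesis read as $f<\sqrt{Nr+1}$, consistent with the reading above.
\end{itemize}
Under that reading, $f\le l(Nr+1)<\sqrt{Nr+1}$ and neither number is a square. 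Theorem \ref{main-thm} then gives $[\mathcal{X}(N,r,f)]>0$.

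Hence computing such a pair is the same task as computing a divisor $f$ of $Nr+1$ in the window $(l(Nr),l(Nr+1)]$. The main obstacle is the bookkeeping at the boundary: the perfect-square case $f=\sqrt{Nr+1}$ and the choice of cofactor. Everything else is immediate from Theorem \ref{main-thm}.
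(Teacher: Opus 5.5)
Your proposal is correct and follows the paper's route: the paper simply reads the corollary off Theorem \ref{main-thm}, with Lemma 2 implicitly supplying the non-square hypothesis, and your case analysis fills in the boundary bookkeeping the paper's one-line derivation leaves out. That bookkeeping is genuinely needed, because the literal statement fails at the boundary in both directions: for $N=3$, $r=1$, $f=2$ we have $l(3)<2\le l(4)$ yet $\mathcal{X}(3,1,2)=\{0\}$, while for $N=7$ the pair $(r,f)=(2,5)$ is valid even though $5>l(15)=3$. So restricting to $f<\sqrt{Nr+1}$ (equivalently, identifying $f$ with its smaller cofactor) is exactly the right reading.
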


For a randomly chosen $1\leq r<N$,  computing  $l(Nr+1)$, $l(Nr)$
and verifying whether there exists a divisor $f$ of $Nr+1$ such that 
$l(Nr)<f\leq l(Nr+1)$ require factoring both $Nr$ and $Nr+1$. An interesting
question is: \\

{\it Can we compute a pair $(r,f)$ without having to factor $Nr$, $Nr+1$,
more importantly without having to factor $N$?} \\

In the following section we give a randomized algorithm to compute a pair $(r,f)$. For prime $N$, the computed pair is always a solution. But, for composite $N$, verifying whether the computed pair is a solution requires checking if $r$ is prime or not.

\section{Procedure to compute a pair $(r,f)$}
\begin{definition}
A pair $(r,f)$ is called a valid pair if $[\mathcal{X}(N,r,f)]>0$.
\end{definition} 

A procedure for finding pairs $(r,f)$ is given as Algorithm \ref{alg}.
 We prove the following for a generated pair.
\begin{itemize}
 \item when $N$ is prime, a pair generated by the algorithm is certainly valid
\item  when $N$ is composite, a pair generated by the algorithm is
valid with probability $ > 3/4$. So, in this case the algorithm is probabilistic.
\end{itemize}

 \begin{algorithm}[H]
 \caption{Algorithm for computing $(r,f)$}
 \label{alg}
 Input:  An integer $N$, a real number $1/2\leq k<1$  \\
 Output: A pair $(r,f)$
\label{alg:power}
\begin{algorithmic}[1] 
 \IF{$N$ is prime} 
 \STATE Choose $f\in (1,N-1)$ 
 \STATE Compute $f^{'}$ such that $ff^{'}=1\pmod{N}$ 
 \STATE Compute $r = (ff^{'}-1)/N$ 
 \STATE Output $(r,f)$
 \ENDIF 
 \IF{$N$ is composite}
 \STATE Choose $f \in (kN,N-1)$ co-prime to $N$
 \STATE Compute $f^{'}$ such that $ff^{'}=1\pmod{N}$ 
 \STATE Compute $r = (ff^{'}-1)/N$ 
 \STATE \textbf{if} $r<kf$ repeat steps 8-10 
 \STATE \textbf{else} output $(r,f)$.
 \ENDIF  
 \end{algorithmic}
\end{algorithm}  

For a given integer $N$, let $(r,f)$ be a pair generated by the algorithm. We know that
the set $\mathcal{X}(N,r,f)$ consists of integers $A+B - (f + \frac{Nr+1}{f})$  where $AB=Nr$.
There are four possible ways in which $N$ and $r$ can split between
$A$ and $B$ in the expression $A+B - (f + \frac{Nr+1}{f})$, based on the compositeness of $N$ and $r$.
We have 4 different cases: Case 1, Case 2, Case 3,
and Case 4. The elements of $\mathcal{X}(N,r,f)$ fall into one of these cases. 
For each case except Case 4, we find that
the elements of $\mathcal{X}(N,r,f)$ are greater than
$0$. But, for Case 4, we need a probability argument for the elements to be greater than $0$.

\subsection*{Case 1: (both $N$ and $r$ are prime) }
 \begin{displaymath}
  A+B = N+r
 \end{displaymath}
 Since $r<f$, $A+B - (f + \frac{Nr+1}{f})>0$. 
\subsection*{Case 2: ($N$ is composite, and $r$ is prime)} Let $N = n_1n_2$. Without loss of
 generality, $1<n_1\leq n_2$. We have $n_2 \leq \frac{N}{2}$ as $n_1\geq 2$, and thus $A+B$ can be expressed in two possible ways:
 \begin{displaymath}
   A+B = n_1r + n_2 \,\textrm{(or)}\, n_1 + n_2r
 \end{displaymath}
  When $N$ is composite, $f$ is chosen to be greater than 
 $\frac{N}{2}$.  Thus, we have $n_1,n_2<f$. Therefore, in both sub cases
 $A+B - (f + \frac{Nr+1}{f})>0$. 
 \subsection*{Case 3: ($N$ is prime, and $r$ is composite) }
 Suppose $r = r_1r_2$. Without loss of generality, suppose that $1<r_1\leq r_2$. Then,
 $r$ can split between $A$ and $B$
 in two possible ways:
  \begin{displaymath}
   A+B = nr_1 + r_2 \,\textrm{(or)}\, r_1 + nr_2
 \end{displaymath}
Since $r_1,r_2<r<f$,  $A+B - (f + \frac{Nr+1}{f})>0$. 
\subsection*{Case 4: (both $N$ and $r$ are composite) }
Let $N=n_1n_2$, and $r=r_1r_2$.
 \begin{displaymath}
   A+B = n_1r_1 + n_2r_2 
 \end{displaymath} 
 where $n_1r_1<\sqrt{Nr}$. As $f$ satisfies 
 $\sqrt{kNr}<f<\sqrt{Nr}$. 
 If $n_1r_1<f$, then $A+B - (f + \frac{Nr+1}{f})>0$. 
 
In the algorithm, $f^{'}$ is the multiplicative inverse of $f \pmod N$.
So, $ff^{'} = 1+Nr$ for some $1\leq r<f$.  With probability $1-k$,  $r$ 
falls in $[kf,f)$. So, we have the conditions: $kN < f<N$ and $kf < r<f$.  
From these conditions, we can deduce $\sqrt{k}f< \sqrt{Nr}< f/\sqrt{k}$,
equivalently, $\sqrt{kNr}< f < \sqrt{Nr/k}$.

\subsection*{When $N$ is prime}
 Only Case 1 and Case 3 happen.
 In these two cases, for any chosen $1<f<N-2$, and the computed $r$ by the algorithm, $[\mathcal{X}(N,r,f)] > 0$. Thus, a pair generated by the algorithm is always a {\bf valid pair}.
 \subsection*{When $N$ is composite}
  We get two separate conditions depending on whether $r$ is prime or not.
\subsubsection*{When $r$ is prime} We are in Case 2, in which the corresponding elements of $\mathcal{X}(N,r,f)$ are greater than $0$. Thus, $[\mathcal{X}(N,r,f)]>0$. Thus the computed pair $(r,f)$ is a {\bf valid pair}.

\subsubsection*{When $r$ is composite} We are in Case 4, in which
 we may not be able to tell with certainty that $[\mathcal{X}(N,r,f)]>0$.
 It is known from Case 4 that $\sqrt{kNr}<f<\sqrt{Nr}$. Thus, 
 if $l(Nr)<\sqrt{kNr}$, then $[\mathcal{X}(N,r,f)]>0$.   
 We prove the following result.
 \begin{theorem}
 \label{prob}
 The probability that $l(Nr)<\sqrt{kNr}$ is at least $\frac{1}{2}+\frac{k}{2}$. 
 \end{theorem}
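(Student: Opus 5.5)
The plan is to read the statement as a claim about the \emph{position} of $l(Nr)$ inside the interval $[1,\sqrt{Nr}]$, in the probabilistic model the paper already uses: $r$ is treated as uniformly distributed once it lands in the window produced by the algorithm. Everything is normalized by $\sqrt{Nr}$. Write $l(Nr)=\theta\sqrt{Nr}$ with $0<\theta<1$; the latter holds because, by Lemma~2 and the hypotheses of Theorem~\ref{main-thm}, $Nr$ is not a perfect square. The event in Theorem~\ref{prob} is then $\theta<\sqrt{k}$. It therefore suffices to bound the probability of the complementary event $\sqrt{k}\le\theta<1$ by $\frac{1-k}{2}$.

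The first step splits on which side the factor $n_1r_1$ realizing $l(Nr)$ falls, as in the Case~4 decomposition $A+B=n_1r_1+n_2r_2$. The algorithm gives $kN<f<N$ and $kf\le r<f$. From these I will extract the two ranges $\sqrt{k}f<\sqrt{Nr}<f/\sqrt{k}$ and $\sqrt{kNr}<f$, both already derived in the text. The divisor $l(Nr)$ lies below $\sqrt{Nr}$. For it to exceed $\sqrt{kNr}$, it must fall in the window $[\sqrt{kNr},\sqrt{Nr})$, whose relative length is $1-\sqrt{k}$. Since $1-\sqrt{k}\le 1-k$ fails the wrong way, I will instead use the multiplicative scale. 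Passing to $\log$, the window has relative log-length $\frac{1}{2}\log(1/k)$ out of $\frac12\log(Nr)$. I will argue directly on the symmetric pair of divisors $(d, Nr/d)$ instead.

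The key step pairs each divisor $d$ with its complement $Nr/d$. The event $l(Nr)\ge\sqrt{kNr}$ is equivalent to the existence of a pair $(d,Nr/d)$ with ratio $Nr/d^2\in(1,1/k]$. Conditioning on the split of $r=r_1r_2$ coming from the algorithm's random $f$, I will treat $r$ as uniform on $[kf,f)$. I then claim that for at most half of these $r$ (plus the proportion $\frac{1-k}{2}$ coming from the boundary) does a near-balanced pair exist. This is done by a reflection argument: the map $r\mapsto$ its partner in the interval sends balanced configurations to unbalanced ones. Summing over the window gives failure probability at most $\frac{1-k}{2}$, hence success at least $1-\frac{1-k}{2}=\frac12+\frac k2$.

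The main obstacle is making this reflection (equidistribution) step rigorous. The location of $l(Nr)$ is an arithmetic quantity, and heuristics on the divisor distribution alone cannot control it. I expect to have to adopt an explicit modeling assumption: that, conditional on the algorithm's output, $l(Nr)/\sqrt{Nr}$ is distributed no worse than uniformly on $(0,1)$. This would be justified by the uniformity of $f$ on $(kN,N)$ and of $r$ on $[kf,f)$. I would first try to make this precise via the bijection $f\mapsto f'$ modulo $N$, and fall back on stating it as the model's hypothesis if that fails.
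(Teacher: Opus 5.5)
\textbf{There is a genuine gap: the key step is asserted, not argued, and your fallback hypothesis would give a weaker constant than the theorem claims.}

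\emph{The reflection step.} The ``reflection'' on $r\in[kf,f)$ is never defined. There is no visible reason why any map on $r$ should exchange the arithmetic event ``$Nr$ has a divisor in $[\sqrt{kNr},\sqrt{Nr})$'' with its complement. The bookkeeping is also inconsistent. ``At most half of these $r$, plus a proportion $\frac{1-k}{2}$'' bounds the failure probability by $\frac12+\frac{1-k}{2}$, not by $\frac{1-k}{2}$.

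\emph{The fallback hypothesis.} Suppose $\theta=l(Nr)/\sqrt{Nr}$ is uniform on $(0,1)$, or stochastically smaller. Then you only get $\Pr[\theta<\sqrt k]\ge\sqrt k$, and
\[
\Bigl(\frac12+\frac k2\Bigr)-\sqrt k=\frac{(1-\sqrt k)^2}{2}>0\qquad(0<k<1).
\]
So the conclusion would be strictly weaker than claimed: $\approx 0.707$ rather than $0.75$ at $k=\frac12$. This is exactly the ``wrong way'' failure you noticed in your second paragraph, and nothing later in the plan repairs it. In any case, a postulated law for $l(Nr)$ is an assumption, not a proof.

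\emph{What the paper does instead.} The paper ignores the algorithm's distribution and proves a density statement over all $n\le x$ by counting balanced factorizations. Each $n\in\bar\tau(x,k)$ is $ab$ with $a=l(n)$ and $a\le b\le a/k$, so $|\bar\tau(x,k)|$ is bounded by a lattice-point count. The paper takes about $(\frac1k-1)a$ values of $b$ for each $a\le\sqrt{xk}$ and sums to $\approx\frac{(1-k)x}{2}$. That count of factor pairs is the idea missing from your plan.

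\emph{Caveat on the paper's count.} You should not copy the paper's count verbatim, because it is flawed in three ways:
\begin{itemize}
\item The cutoff $a\le\sqrt{xk}$ is unjustified. For $\sqrt{xk}<a\le\sqrt x$ the constraint $b\le x/a$ leaves about $x/a-a$ choices of $b$. For instance, $99=9\cdot 11$ is missed when $x=100$, $k=\frac12$.
\item With that range included, the union bound becomes $\approx\frac x2\log\frac1k$, which exceeds $\frac{(1-k)x}{2}$. So a union bound over $a$ cannot reach the constant $\frac12+\frac k2$ at all.
\item The paper's finite bound is actually false: $43$ integers $n\le 100$ lie in $\bar\tau(100,\frac12)$, against the paper's bound of about $28.5$.
\end{itemize}

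\emph{What is actually true.} The density version is nevertheless true, and much more holds. Since $\bar\tau(x,k)\subseteq\{ab:\ a,b\le\sqrt{x/k}\}$, Erd\H{o}s's multiplication-table theorem gives $|\bar\tau(x,k)|=o(x)$. A correct proof of the density statement therefore has to exploit the heavy overlaps among factor pairs, not a union bound or a uniformity model. Transferring it to $n=Nr$ with $r$ drawn from the algorithm needs a further argument that neither you nor the paper supplies.
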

  
  As a consequence of the above result, we obtain the success rate of the algorithm for composite $N$. \\
  
\begin{remark}
 \textit{For a composite $N$, a pair $(r,f)$ generated by the algorithm is valid if $r$ is prime,
  and is valid with probability $> 3/4$ if $r$ is composite. }
\end{remark}
\section{Proof of Theorem \ref{prob}}
 \noindent For integer $x$, a real number $0<k<1$, define the set
 \begin{displaymath}
\tau(x,k) = \{r\leq x: l(r) < \sqrt{kr} \}  .
 \end{displaymath}
 In establishing a lower bound on the size of $\tau$, we work with its complement set $\bar{\tau}$. 
 The set $\bar{\tau}(x,k)$ consists of 
 integers $r\leq x$ such that $\sqrt{kr}\leq l(r)\leq \sqrt{r}$. \\ 
 
 For integer $a\geq 1$, define  $S(x;a,k) = \{ab\leq x: a<b<a/k\}$. Clearly, $a/k<x/a$. This inequality 
 implies that $a\leq \sqrt{xk}$. Thus, $|S(x;a,k)| = (\frac{1}{k}-1)a$.  
 \begin{lemma}
 For any $0<k\leq 1$
 \begin{displaymath}
  \bar{\tau}(x,k) = \bigcup_{a=1}^{\sqrt{xk}} S(x;a,k)
  \end{displaymath}
  \end{lemma}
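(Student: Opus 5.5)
The plan is to prove the two inclusions separately. In each direction I use the correspondence between an integer $r$ and its factorization $r=ab$ with $a=l(r)$ and $b=r/l(r)$.

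The key equivalence, for positive integers $a,b$ with $r=ab$, is
\begin{displaymath}
a\geq \sqrt{kr} \iff a^2\geq kab \iff b\leq \frac{a}{k}.
\end{displaymath}
In words, a divisor $a$ of $r$ is at least $\sqrt{kr}$ exactly when its cofactor $b$ is at most $a/k$. The same computation with strict inequalities gives $a>\sqrt{kr}$ if and only if $b<a/k$. Similarly, $a\leq\sqrt r$ is equivalent to $a\leq b$.

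\emph{Inclusion} $\supseteq$. Take $r=ab\in S(x;a,k)$, so $r\leq x$ and $a<b<a/k$. Since $a<b$, $a$ is a divisor of $r$ with $a<\sqrt r$. Since $l(r)$ is the largest such divisor, $l(r)\geq a$. By the strict form of the equivalence, $b<a/k$ gives $a>\sqrt{kr}$. Hence $\sqrt{kr}<a\leq l(r)\leq\sqrt r$, so $r\in\bar\tau(x,k)$.

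\emph{Inclusion} $\subseteq$. Take $r\in\bar\tau(x,k)$ and set $a=l(r)$, $b=r/a$. From $l(r)\leq\sqrt r$ we get $a\leq b$. From $\sqrt{kr}\leq l(r)$ the equivalence gives $b\leq a/k$. We also have $ab=r\leq x$. So $r$ lies in $S(x;a,k)$, up to the boundary cases discussed below. It remains to check that the index $a$ lies in the range $1\leq a\leq\sqrt{xk}$. For this I will use the observation made just before the lemma, that membership in $S(x;a,k)$ forces $a\leq\sqrt{xk}$: combine $a^2\leq kr$, i.e.\ $b\geq a/k$ read in the other direction, with the chain $\sqrt{kr}\leq a$ and $r\leq x$.

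I expect the main obstacle to be these boundary and range issues rather than the algebra. First, $S(x;a,k)$ is defined with strict inequalities, while $\bar\tau$ allows equality. A perfect square $r$ has $l(r)=\sqrt r$, so $b=a$, and an $r$ with $l(r)=\sqrt{kr}$ exactly has $b=a/k$. Neither type is captured by the strict inequalities. I would handle this by reading the inequalities in $S(x;a,k)$ as non-strict. Alternatively, one can note that these boundary $r$ contribute negligibly to the later counting, since perfect squares up to $x$ number $O(\sqrt x)$, so the counting estimate for Theorem~\ref{prob} is unaffected.

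Second, the bound $a\leq\sqrt{xk}$ does not obviously follow from $ab\leq x$ and $a<b<a/k$ alone. Those conditions only give $a\leq\sqrt x$. I would first try to justify the stated range via the paper's remark $a/k<x/a$. If that fails, I would take the union over $1\leq a\leq\sqrt x$, noting that the sets $S(x;a,k)$ with $\sqrt{xk}<a\leq\sqrt x$ are simply additional pieces of $\bar\tau(x,k)$. In that case the identity remains correct with the enlarged index range.
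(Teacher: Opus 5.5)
Your $\supseteq$ direction is correct, and slightly more careful than the paper's, since you make explicit that $l(r)\ge a$. The $\subseteq$ direction cannot be completed, because the identity is false as stated. Both reasons are ones you raise but leave unresolved.

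\emph{Strict inequalities.} These already break the identity. The integer $r=1$ lies in $\bar\tau(x,k)$ for every $0<k\le 1$, since $l(1)=1$. Yet it lies in no $S(x;a,k)$, because $a<b$ forces $ab\ge 2$.

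\emph{Index range.} This is the more serious failure. Membership in $S(x;a,k)$ does not force $a\le\sqrt{xk}$. The paper's ``$a/k<x/a$'' is just a restatement of $a<\sqrt{xk}$, so appealing to it is circular. Your derivation uses $a^2\le kr$, but for $a=l(r)$ with $r\in\bar\tau(x,k)$ you have the reverse, $a^2\ge kr$. Together the two only cover the boundary case $a=\sqrt{kr}$.

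A concrete counterexample: take $x=100$, $k=1/2$, $r=80$.
\begin{itemize}
\item[(i)] $l(80)=8$ and $\sqrt{40}\le 8\le\sqrt{80}$, so $80\in\bar\tau(100,1/2)$; indeed $80\in S(100;8,1/2)$.
\item[(ii)] But $8>\sqrt{50}$. Every factorization $80=ab$ with $a\le 7$, namely $(1,80),(2,40),(4,20),(5,16)$, has $b>2a=a/k$.
\item[(iii)] Hence $80\notin\bigcup_{a\le\sqrt{50}}S(100;a,1/2)$.
\end{itemize}
The paper's own proof has the same two gaps. It derives $l(r)<r/l(r)<l(r)/k$ from non-strict hypotheses, and it never checks $l(r)\le\sqrt{xk}$. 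So there is no trick you are missing.

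What your argument does prove, essentially verbatim, is the corrected identity
\[
\bar\tau(x,k)=\bigcup_{1\le a\le\sqrt{x}}\{ab\le x:\ a\le b\le a/k\}.
\]
You should state this and commit to it rather than hedge.

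Be aware that the enlarged range is not harmless for the later counting. For $\sqrt{xk}<a\le\sqrt x$, the constraint $ab\le x$ binds before $b<a/k$ does. So $|S(x;a,k)|\approx x/a-a$ rather than $(\frac1k-1)a$, and these extra pieces contribute about $\frac{x}{2}\ln\frac1k-\frac{x(1-k)}{2}$. The union bound then only gives $|\bar\tau(x,k)|\le(\frac12\ln\frac1k+o(1))x$. The counting argument behind Theorem~\ref{prob} therefore yields only $1-\frac12\ln\frac1k$, about $0.65$ for $k=1/2$, not $\frac12+\frac k2$.

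Your remark about the boundary cases is fine. Each $a$ contributes at most two such $r$, namely $a^2$ and $a^2/k$, so they total $O(\sqrt x)$.
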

\proof{For $ab\in S(x;a,k)$,  $\sqrt{kab}\leq a \leq \sqrt{ab}$, which satisfies the 
definition of $\bar{\tau}$. Thus $ab\in \bar{\tau}$. Conversely, suppose 
$r\in \bar{\tau}(x,k)$. The integer $r = ab$ where $a = l(r)$, $b = \frac{r}{l(r)}$. 
Clearly, $r\in S(x;l(r),k)$ since the condition $\sqrt{kr}\leq l(r)\leq \sqrt{r}$
 implies that $l(r)< \frac{r}{l(r)}< l(r)/k$. This proves the claim. }  
 \hfill $\Box$   \\
 
 From the above result, we have  
 \begin{eqnarray}
  |\bar{\tau}(x,k)|  \leq  \sum _{a=1}^{\lfloor \sqrt{xk} \rfloor } |S(x;a,k)|  
                    & = & (\frac{1}{k}-1) \sum_{a=1}^{\lfloor \sqrt{xk} \rfloor}  a  \nonumber \\
                    & \leq &  (\frac{1}{k}-1)\bigg( \frac{xk}{2}+ \frac{1}{2}\sqrt{xk} \bigg). \nonumber 
 \end{eqnarray}
So, $|\tau(x,k)| = x -  |\bar{\tau}(x,k)| \geq (\frac{1}{2}+\frac{k}{2})x - \frac{\sqrt{xk}}{2}$.
Thus, the probability that $n\in \tau(x,k)$ is $\frac{|\tau(x,k)|}{x} 
\geq \frac{1}{2}+\frac{k}{2} - \frac{\sqrt{k}}{2\sqrt{x}}$. The asymptotic probability is $1/2+k/2$. This 
completes the proof. 
\section{Combinatorial Aspect}
\noindent For a given integer $N\geq 1$, we want to estimate the number of distinct
$r$ such that $[\mathcal{X}(N,r,f)]>0$. We discuss this problem in two 
separate cases: when $N$ is prime and when $N$ is composite.  \\

For integer $N\geq 1$, define
\begin{displaymath}
 \mathfrak{F}(N) = \{ r : \exists f, f|Nr+1, [\mathcal{X}(N,r,f)]>0 \}
\end{displaymath}
By Theorem \ref{main-thm}, we know that $\mathfrak{F}(N)$ is the set of 
integers $r$ for which there is a divisor $f$ of $Nr+1$ with the property: $l(Nr)<f\leq l(Nr+1)$. \\

To estimate the size of $\mathfrak{F}(N)$,  we first calculate the number of distinct pairs $(r,f)$
such that $[\mathcal{X}(N,r,f)]>0$ where $1\leq r<N$, and $1\leq f<N$. For each such pair $(r,f)$, $r < f$.  Further, we note the following two properties.
\begin{enumerate}
 \item The same $r$ can appear in more than one $(r,f)$ pairs. It is because $Nr+1$ may have more than one
 divisor which fall in the interval $(l(Nr), l(Nr+1))$.
 \item An $f$ can appear only in at most one $(r,f)$ pair. The proof of this fact is by contradiction.
 Suppose there are two pairs $(f,r_1)$, $(f,r_2)$ with $r_1\not = r_2$. This implies that 
 $f$ divides both $Nr_1+1$, $Nr_2+1$, and thus $f|N(r_1-r_2)$. Since $r_1,r_2<f$, $r_1=r_2$, a contradiction. 
\end{enumerate} 

\subsection{Binary matrix $\mathcal{M}(N)$}
For integer $N$, Let $\mathcal{M}(N)$ be a binary matrix defined as 
follows. 
\begin{displaymath}
 \mathcal{M}(N) = (m_{f,r}),  1\leq f<N, 1 \leq r<N
\end{displaymath}
where 
\begin{displaymath}
 m_{fr} = \left\{ \begin{array}{ll}
                   1 & \textrm{if $gcd(f,N)=1$ and $[\mathcal{X}(N,r,f)]>0$} \\
                   0 & \textrm{if $gcd(f,N)=1$ and $[\mathcal{X}(N,r,f)]\not >0$} \\
                   0 & \textrm{if $gcd(f,N)>1$} 
                  \end{array} \right.
\end{displaymath}

The matrix takes the following structure (see Figure \ref{fig:my_image}).
\begin{itemize}
 \item Each row of the matrix gets at most $1$ non-zero entry due to the second property of $(r,f)$ pairs.
 \item All non-zero entries appear in below diagonal region of the matrix since $(r,f)$ is a pair with $r<f$. So, the matrix is lower triangular.
 \item  The first row of $\mathcal{M}(N)$ consists of all zeros since $[\mathcal{X}(N,r,1)]\not >0$. 
\end{itemize}

\begin{figure}[htbp]
    \centering 
    \includegraphics[width=0.7\linewidth]{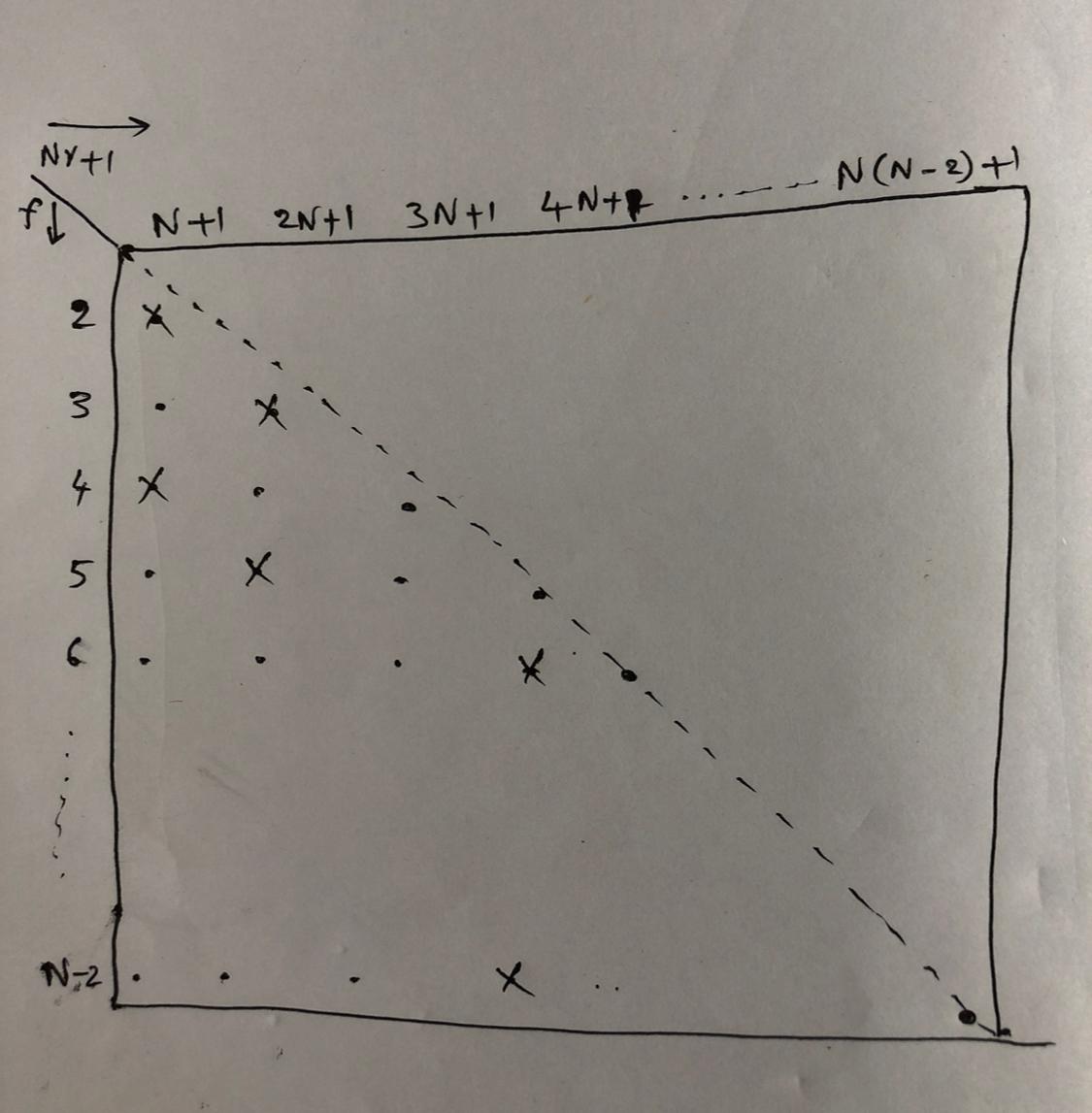} 
    \caption{The matrix depicts for a prime $N$. A row corresponding to an $f$ dividing $Nr+1$ that represents a column. Each row has only one 'X' denoting that $f$ divides one $Nr+1$ with $r < f$. The symbols 'X' spread across columns only in below diagonal region. } 
    \label{fig:my_image} 
\end{figure}

\noindent \textbf{Definition.} A row of $\mathcal{M}$ is called an \textit{active} row if it gets a non-zero entry.    \\

Let $k_r$ represent the number of non-zero entries in $r^{th}$ column of the matrix. The quantity $k_r$ is nothing but the number of divisors $f$ of $Nr+1$ that are $> L(Nr)$. We have
\begin{displaymath}
\sum_{r=1}^{N} k_r = \textrm {the number of active rows} 
\end{displaymath}  

\begin{lemma} 
For $1\leq r<N$, $k_r$ is an even integer $\geq 0$ provided $Nr+1$ is not 
a perfect square.  
\end{lemma}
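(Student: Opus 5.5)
The plan is to prove evenness with an involution on divisors of $Nr+1$. I read $k_r$, as in the sentence preceding the lemma, as the number of divisors $f$ of $Nr+1$ lying strictly between $l(Nr)$ and its complementary value. Precisely, $k_r$ is the size of
\[
D_r=\Big\{ f : f\mid Nr+1,\ l(Nr)<f<\tfrac{Nr+1}{l(Nr)}\Big\}.
\]
These are exactly the divisors other than the "trivial" ones lying at or below $l(Nr)$ or at or above its cofactor. Such an $f$ either satisfies $l(Nr)<f\le l(Nr+1)$, giving a non-zero entry by Theorem \ref{main-thm}, or is the cofactor of such an entry.

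First I check that $\iota(f)=(Nr+1)/f$ maps $D_r$ into itself. If $f\mid Nr+1$, then $\iota(f)$ is again a divisor of $Nr+1$. The two inequalities swap:
\[
f>l(Nr)\iff \iota(f)<\tfrac{Nr+1}{l(Nr)},\qquad f<\tfrac{Nr+1}{l(Nr)}\iff \iota(f)>l(Nr).
\]
Since $\iota\circ\iota=\mathrm{id}$, $\iota$ is an involution on $D_r$.

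Next I show $\iota$ has no fixed point. A fixed point would satisfy $f^2=Nr+1$, which is excluded because $Nr+1$ is not a perfect square. So $D_r$ splits into orbits of size exactly $2$, namely $\{f,(Nr+1)/f\}$, and $|D_r|=k_r$ is even. Nonnegativity is trivial. I would also note that $D_r$ can be empty, for instance when $l(Nr+1)\le l(Nr)$, which gives $k_r=0$.

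The main obstacle is reconciling this count with the matrix $\mathcal{M}(N)$ itself. There, only $f<N$ coprime to $N$ with $f\le l(Nr+1)$ are marked, so the partner $\iota(f)>\sqrt{Nr+1}$ is never a marked entry. I would address this explicitly. Every $f\in D_r$ is automatically coprime to $N$, since $f\mid Nr+1$. The bound $\iota(f)<(Nr+1)/l(Nr)$ together with $r<f$ (established earlier) forces $\iota(f)<N+1/f$, and $\iota(f)=N$ is impossible by coprimality, so both members of each orbit lie in the range $1\le f<N$. Hence the evenness is a statement about divisors in the admissible range of rows, counted in pairs. The marked entries in column $r$ are exactly one representative from each orbit, so their number equals $k_r/2$. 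I would state this clearly so that the identity $\sum_r k_r=$ number of active rows is read with the correct convention.
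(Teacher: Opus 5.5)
Your involution argument is correct and is the paper's argument made precise. The paper pairs each $f$ satisfying $l(Nr)<f\le l(Nr+1)$ with its cofactor $\frac{Nr+1}{f}$, and notes that $[\mathcal{X}(N,r,\frac{Nr+1}{f})]>0$ as well; this holds because $\mathcal{X}(N,r,f)$ depends on $f$ only through $f+\frac{Nr+1}{f}$. You add the two points the paper leaves implicit: $D_r$ is closed under $\iota$, and $\iota$ has no fixed point because $Nr+1$ is not a square.

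\textbf{Error in your final paragraph.} That paragraph misreads $\mathcal{M}(N)$ and should be reversed.

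\begin{itemize}
\item \emph{Which rows are marked.} The definition of $m_{fr}$ asks only that $\gcd(f,N)=1$ and $[\mathcal{X}(N,r,f)]>0$. The restriction $f\le l(Nr+1)$ is a hypothesis of Theorem~\ref{main-thm}, not part of the matrix. By the symmetry above, if $f$ is marked in column $r$, so is $\iota(f)$. Your own computation ($\iota(f)<N$ and $\gcd(\iota(f),N)=1$) shows $\iota(f)$ is an admissible row. So the non-zero entries of column $r$ are exactly the elements of $D_r$, both members of each orbit. The column count is $|D_r|$, not $|D_r|/2$.
\item \emph{A concrete check.} Take $N=5$, $r=1$. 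Both $f=2$ and $f=3$ give $\mathcal{X}=\{1\}$, so column $1$ has two entries. Then $\sum_r k_r=2=\phi(5)-2$, which is the number of active rows.
\item \emph{Why this matters.} This identification is what turns your evenness result into a statement about the paper's $k_r$, the column count. The paper also relies on it for $\sum_r k_r=\phi(N)-2$ and $|\mathfrak{F}(N)|\le\frac{\phi(N)-2}{2}$. Under your convention the column count would be $|D_r|/2$, which need not be even, and the active-row identity would be off by a factor of $2$.
\end{itemize}

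The paper's own proof opens with the same slip, describing $k_r$ via $l(Nr)<f\le l(Nr+1)$. Its pairing with the cofactor, however, shows that both divisors are meant to be counted. With your final paragraph corrected, your proof is complete.
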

\proof{The quantity $k_r$ is the number of divisors $f$ of $Nr+1$ with 
$l(Nr)<f\leq l(Nr+1)$. If $f$ is a divisor of $Nr+1$ with 
$l(Nr)<f\leq l(Nr+1)$, then $[\mathcal{X}(N,r,f)]>0$, and also
$[\mathcal{X}(N,r,\frac{Nr+1}{f})]>0$.  } \hfill $\Box$  
\subsection{The size of $\mathfrak{F}(N)$ when $N$ is prime}

\begin{lemma}
 For prime $N$, the number of active rows in $\mathcal{M}(N)$ is $\phi(N)-2$
\end{lemma}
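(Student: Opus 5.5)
The plan is to count, row by row, which $f\in\{1,\dots,N-1\}$ give an active row. Since $N$ is prime, every such $f$ is co-prime to $N$, so only the validity condition matters. By the second property of $(r,f)$ pairs, each row has at most one non-zero entry. So it suffices to show two things: rows $f=1$ and $f=N-1$ are not active, and every row with $2\le f\le N-2$ is active. That gives exactly $(N-1)-2=\phi(N)-2$ active rows.

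\textbf{Step 1: the candidate column for each row.} For a given $f$, let $f'$ be the inverse of $f$ modulo $N$ with $1\le f'\le N-1$, and put $r_f=(ff'-1)/N$, exactly as in Algorithm~\ref{alg}. Then $f\mid Nr_f+1$ and $0\le r_f<f$, because $ff'-1<fN$. Any other $r<f$ with $f\mid Nr+1$ would force $f\mid N(r-r_f)$, hence $r=r_f$, since $N$ is prime and $f<N$. So $r_f$ is the only possible column for row $f$, in agreement with the structure of $\mathcal{M}(N)$ described before the statement.

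\textbf{Step 2: the two excluded rows.} For $f=1$ we have $r_1=0$, which is not an admissible column. This is also the remark that the first row is zero. For $f=N-1$ we have $f'=N-1$, so $Nr_f+1=(N-1)^2$ is a perfect square. By Lemma~2 (the perfect-square lemma), $[\mathcal{X}(N,r_f,N-1)]\not>0$, so this row is inactive.

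\textbf{Step 3: the rows $2\le f\le N-2$.} Here $f'\ne 1$, since $f'=1$ would give $f\equiv 1\pmod N$. Hence $ff'\ge 2f>1$, so $r_f\ge 1$, and $r_f<f<N$. Validity then follows from the case analysis given for Algorithm~\ref{alg} when $N$ is prime. If $r_f$ is prime we are in Case~1: $A+B=N+r_f$, and positivity follows from $r_f<f$. If $r_f$ is composite we are in Case~3: each factor $r_1,r_2$ is less than $f$, and this gives positivity of every element of $\mathcal{X}(N,r_f,f)$. Either way $[\mathcal{X}(N,r_f,f)]>0$, so row $f$ is active.

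As a cross-check, one could instead invoke Theorem~\ref{main-thm}. Neither $Nr_f$ nor $Nr_f+1$ is a perfect square: the first because $N$ is prime and $1\le r_f<N$, and the second should be ruled out except when $f=N-1$. The theorem would then require verifying $l(Nr_f)<f\le l(Nr_f+1)$. The main obstacle is precisely these positivity checks in Cases~1 and~3, which rest on comparing $f+(Nr+1)/f$ with sums like $Nr_1+r_2$. I would take these from the paper's case analysis rather than redo them, and I would treat the tiny primes $N=2,3$ separately, where both counts are $0$ or $1$.
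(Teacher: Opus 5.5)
Your overall structure matches the paper's proof: each row $f$ has a unique candidate column $r_f<f$, row $1$ is inactive because $r_1=0$, row $N-1$ is inactive because $Nr_{N-1}+1=(N-1)^2$, and every other row should be active. The gap is in Step~3, the only substantive step, which you hand off to Cases~1 and~3.

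The reason you quote there, that positivity ``follows from $r_f<f$'', does not hold, and your own Step~2 refutes it. For $f=N-1$ you have $r_f=N-2<f$, yet $\mathcal{X}$ contains $0$. When $N-2$ is prime this is exactly Case~1; for example, $\mathcal{X}(5,3,4)=\{8,0\}$. So whatever makes the rows $2\le f\le N-2$ active must use $f\le N-2$, and your argument never does.

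The appeal to Case~3 has a second hole. It only treats the nontrivial splits $Nr_1+r_2$ and $r_1+Nr_2$. For composite $r_f$, $\mathcal{X}$ still contains the element coming from $\{A,B\}=\{r_f,N\}$, and that element is the smallest one. The paper's own one-line justification (``$f>l(Nr)=r$, hence valid'') is equally incomplete, so citing it does not close the gap.

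The missing idea is a single identity. For $N$ prime and $1\le r<N$, every divisor of $Nr$ is $d$ or $dN$ with $d\mid r$, so $l(Nr)=r$. By the first lemma of Section~2 ($\mathrm{min}_f(k)=l(k)+k/l(k)$), the least element of $\mathcal{X}(N,r,f)$ is $N+r-f-\frac{Nr+1}{f}$. Then
\[
f\Bigl(N+r-f-\frac{Nr+1}{f}\Bigr)=(f-r)(N-f)-1 .
\]
This is the computation in the proof of Theorem~\ref{main-thm} with $l(Nr)=r$. It needs no hypothesis $f<\sqrt{Nr+1}$, which in fact fails for half of the rows $2\le f\le N-2$ (e.g.\ $N=7$, $f=4$, $r_f=1$). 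So your proposed cross-check via Theorem~\ref{main-thm} would not apply directly without the symmetry $f\leftrightarrow (Nr+1)/f$.

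For $1\le r_f<f\le N-1$ both factors $f-r_f$ and $N-f$ are at least $1$. Hence the least element is $\ge 0$, with equality exactly when $f-r_f=1=N-f$, i.e.\ $f=N-1$. This settles Steps~2 and~3 together and makes the split on the primality of $r_f$ unnecessary.

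Finally, $N=2$ is a genuine exception, not just a small case to check. There are $0$ active rows but $\phi(2)-2=-1$, so the lemma needs $N$ odd.
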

\proof{For a chosen $2\leq f<N$, we have $1\leq r<f$ 
such that $f$ divides $1+Nr$. Since $N$ is prime, $f>l(Nr)=r$, 
and thus $\mathcal{X}(N,r,f)$ consists of all positive integers. Therefore 
$r\in \mathfrak{F}(N)$. This argument is true for any $f>1$ that is co-prime to $N$. When 
$f = N-1$, $\mathcal{X}(N,r,f)$ consists of all positive integers except one zero. 
This implies that the number of $(r,f)$ pairs s.t. $\mathcal{X}(N,r,f)$ has all positive values
is exactly equal to $\phi(N)-2$. } \hfill $\Box$ \\

From the above result, we have 
\begin{displaymath}
 \sum_{r=1}^{N-3} k_r = \phi(N)-2
\end{displaymath} 
 
Since $N$ is prime, $rN+1$ is not a perfect square for any $1\leq r\leq N-3$, and thus $k_r$ is even.  \\

Suppose each column in $\mathcal{M}(N)$ gets not more than two entries, i.e, 
for $1\leq i\leq N-3$, $k_i=2$ or $0$. Then, $|\mathfrak{F}(N)|$ will be equal to 
$\frac{\phi(N)-2}{2}$. This gives us the upper bound for $|F(n)|$. 
\begin{displaymath}
|\mathfrak{F}(N)| \leq \frac{\phi(N)-2}{2} = \frac{N-3}{2}. 
\end{displaymath}
The upper bound is attained for some primes. For 
example: for $N=5,7$, $|\mathfrak{F}(N)|= \frac{\phi(N)-2}{2}$. But, for $N=11$, $|\mathfrak{F}(N)| = \frac{\phi(N)-2}{2}-1$. 
Empirically, it is observed that only $5$ and $7$ are only primes
for which $|\mathfrak{F}(N)|$ attains the maximum values.  As $N$ increases, 
$|\mathfrak{F}(N)|$ tends to get much smaller than $\frac{\phi(N)-2}{2}$. This behaviour is in accordance 
with our intuition that as $N$ increases, the number of entries in a given column of the matrix increases.
It is interesting to ask the following question.
\begin{center}
How small the size of $\mathfrak{F}(N)$ could get? 
\end{center}

Considering the average behaviour of divisor function, one would get a lower bound $\frac{\phi(N)-2}{2*\log(N)}$. However, this is a not-so-tight lower bound as we are looking for divisors $f$ that are subject to the condition that $f > L(nr)$. To confirm, we have gathered some empirical evidence.

\subsubsection*{Empirical observation}
We compare the ratio $\frac{\phi(N)-2}{2|\mathfrak{F}(N)|}$ and the quantity $\log \log(N)$. For initial values of
$N$, $\frac{\phi(N)-2}{2|\mathfrak{F}(N)|}> \log\log(N)$. For the first time, when $N = 1151$, the ratio
becomes less than $\log\log N$. After that until $N = 25453$, one or the other is bigger.
After that point, for every prime $N>25453$, the ratio is bigger than $\log\log N$. To account for
 observed difference between the two quantities we introduce a variable $C_N$.

\begin{displaymath}
  \frac{\phi(N)-2}{2|\mathfrak{F}(N)|} = \log\log(N) - C_N
\end{displaymath}
Equivalently, 
\begin{displaymath}
  |\mathfrak{F}(N)| = \frac{\phi(N)-2}{2(\log\log(N) - C_N)}
\end{displaymath} 
The data in Table 1 shows that $C_N$ increases with increasing $N$.
\begin{table}
\caption{data on $\mathfrak{F}(N)$}
\begin{center}
\begin{tabular}{|l|l|l|l|}
\hline
The magnitude of $N$ & $\frac{\phi(N)-2}{2|\mathfrak{F}(N)|}$ & $\log\log N$ & $C_N$  \\
\hline
$10^5$ & 2.37  & 2.4434 & 0.05 \\
\hline
$10^6$ & 2.510 & 2.625 & .109 \\
\hline
$10^7$ & 2.6275 & 2.7799 &  0.15 \\
\hline
$10^8$ & 2.7265 & 2.9134 &  0.2069 \\
\hline
\end{tabular}
\end{center}
\end{table}

\subsection{The size of $|\mathfrak{F}(N)|$ when $N$ is composite}
Generally, for a given integer $N$, the numbers $k_i$ satisfy the following inequality 
\begin{displaymath}
 \sum_{\stackrel {i<N,} {gcd(i,N)=1}} k_i \leq \phi(N) - 2 - 2*A(2,l(N)) -(2^u-2)
\end{displaymath}

Here, $u$ is the number of distinct prime factors of $N$, the function 
$A(2,l(N))$ gives the number of integers in $[2,l(N)]$ that are co-prime to $N$.
Note that when $N$ is prime, $A(2,l(N))$ is zero, and $2^u-2$ is zero.

Like for prime $N$, we have compared the ratio $\frac{\phi(N)-2}{2|\mathfrak{F}(N)|}$ and the quantity $\log \log(N)$. For many composite $N$, $\frac{\phi(N)-2}{2|\mathfrak{F}(N)|}$ is about $5*\log \log(N)$. We are yet to confirm this behaviour with more data.

\section{Procedure to compute pairs $(r,f)$ with $r>N$}
\noindent Throughout the paper we are interested in computing pairs $(r,f)$ such that
$[\mathcal{X}(N,r,f)] > 0$ and $r<N$. We now ask whether
there exists an $r>N$ such that $[\mathcal{X}(N,r,f)] > 0$.
The answer is positive. Indeed, there exist infinitely many such $r>N$. We prove this result using 
Dirichlet's theorem on primes in arithmetic progressions.
\begin{itemize}
 \item Let $a\geq 1$ be an integer co-prime to $N$ 
 \item Let $b$ be the integer such that $ab \equiv 1\pmod{N+a}$. Such a number $b$
  exists since $a$ and $N+a$ are co-prime
 \end{itemize} 
 We have the following result.
 \begin{lemma}
  If $r = b+i(N+a)$ is prime for some $i\geq 2$, then 
  $\mathcal{X}(N,r,N+a)$ consists of all positive values. 
 \end{lemma}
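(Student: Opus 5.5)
The plan is to verify directly that $f=N+a$ is an admissible divisor, to identify the smallest element of $\mathcal{X}(N,r,N+a)$ using Lemma 1, and to show that this smallest element is positive by the same factorisation trick used in the proof of Theorem \ref{main-thm}.

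\emph{Step 1: $N+a$ divides $Nr+1$.} Work modulo $N+a$. There $N\equiv -a$ and $r\equiv b$, so
\[
Nr+1\equiv -ab+1\equiv 0 \pmod{N+a},
\]
because $ab\equiv 1\pmod{N+a}$. Hence $f=N+a$ is a divisor of $Nr+1$, and $\mathcal{X}(N,r,N+a)$ consists of integers.

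\emph{Step 2: find $l(Nr)$.} Since $i\geq 2$, we have $r\geq 2(N+a)>N$, and $r$ is prime. It follows that $r\nmid N$ and that every divisor of $Nr$ has the form $d$ or $dr$ with $d\mid N$. From $r>N$ we get $\sqrt{Nr}<r$, so every divisor of the form $dr$ exceeds $\sqrt{Nr}$. On the other hand, $N<\sqrt{Nr}$. Therefore $l(Nr)=N$, and by Lemma 1 the least element of $\mathcal{X}(N,r,N+a)$ is
\[
T=N+r-\Big(f+\frac{Nr+1}{f}\Big),\qquad f=N+a.
\]
Note also that $Nr$ is not a perfect square, since $r$ divides it exactly once.

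\emph{Step 3: positivity of $T$.} I would not invoke Theorem \ref{main-thm} itself, because that would also require checking $f<\sqrt{Nr+1}$, and that inequality is unclear when $a\ge N$. Instead I will expand $fT$ exactly as in that theorem's proof:
\[
fT=(f-N)(r-f)-1=a\,(r-N-a)-1 .
\]
Now $r-N-a=b+(i-1)(N+a)\geq N+a\geq 2$, and $a\geq 1$, so $fT\geq 1>0$. Hence $T>0$. Since every element of $\mathcal{X}(N,r,N+a)$ is at least $T$, all of its elements are positive, which is the claim.

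The only delicate point is Step 2, namely pinning down the minimising factorisation of $Nr$. This is where both the primality of $r$ and the inequality $r>N$ (guaranteed by $i\geq 2$) are used. Steps 1 and 3 are routine computations.
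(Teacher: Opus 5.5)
Your proof is correct, and it departs from the paper's at exactly the point where you hesitated.

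The paper also shows $l(Nr)=N$ and $(N+a)\mid Nr+1$. It then asserts $N+a<\sqrt{Nr+1}$ via $Nr+1-(N+a)^2\geq N^2+Nb+1-a^2>0$ and concludes by Theorem~\ref{main-thm}. The second inequality is only guaranteed when $a^2<N^2+Nb+1$ (e.g.\ when $a\leq N$), and it fails in general. For $N=2$, $a=5$ one has $b=3$, and $i=2$ gives the prime $r=17$, yet $Nr+1=35<49=(N+a)^2$. The lemma itself still holds in this example: the factorizations of $34$ give the elements $35-12=23$ and $19-12=7$.

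Your argument covers this case because you do not invoke the theorem. You use its identity $fT=(f-l(Nr))\bigl(Nr/l(Nr)-f\bigr)-1$ directly with $l(Nr)=N$, obtaining $fT=a(r-N-a)-1\geq a(N+a)-1>0$. This needs no placement of $f$ below $\sqrt{Nr+1}$. Your route also avoids the theorem's hypothesis that $Nr+1$ is not a perfect square, which the paper does not check. That hypothesis does hold here, since a square $Nr+1$ with $r$ prime and $r>N$ forces $r=N+2$.

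What the paper's route adds, when it works, is that $f=N+a$ itself lies in the window $l(Nr)<f\leq l(Nr+1)$ of the Corollary. In cases like the one above it is instead the cofactor $(Nr+1)/(N+a)$ that lies there. This is harmless, since $\mathcal{X}(N,r,f)=\mathcal{X}(N,r,(Nr+1)/f)$. Your only tacit assumption is $b\geq 0$, i.e.\ taking $b$ as the least nonnegative residue, which the paper assumes as well.
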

\proof{Since $r>N$, $l(Nr)=N$. Clearly, $N+a$ divides $Nr+1$.  
From the following computation, $N+a<\sqrt{Nr+1}$. 
\begin{eqnarray}
 Nr+1 - (N+a)^2 \geq N^2 + Nb+1-a^2 > 0
\end{eqnarray}
Thus, $N+a$ is a divisor of $Nr+1$ with $l(Nr)<N+a<\sqrt{Nr+1}$. Therefore, 
 $\mathcal{X}(N,r,N+a)$ consists of all positive values. } \hfill $\Box$ \\

 \noindent \textbf{Dirichlet's Theorem.} For co-prime integers $s$ and $t$, the arithmetic progression $A(s,t)$  
 consists of infinitely many primes. \\
 
 Since $b$ and $N+a$ are known to be co-prime, the progression $A(b,N+a)$ consists of infinitely many primes. 
 This proves the result that \textit{there exist infinitely many $r>N$ such that 
  $[\mathcal{X}(N,r,f)] > 0$}.
  
 \subsubsection{Complexity of finding such an $r$}
 Since the the above method requires a prime $r$,  it amounts to finding a prime in an arithmetic 
 progression efficiently.  But, the following result shows that finding such an $r$ may not be 
 efficient as of now. \\
 
 \noindent \textbf{On the least prime in an arithmetic progression:}  Linnik result proves that there exist 
 some constants $c,L$ such that least prime in the progression
 is less than $cd^{L}$. According to the result of Heath Brown, $c = 2.5$, $L = 5.2$. 
 Thus, the 
 least prime in $r+i(N+a)$ is less than $2.5(N+a)^5.2 < 92*N^{5.2}$. A brute force  
 method requires $O(N^6)$ primality tests to find the least prime. Even with catch 
 that half the numbers in the progressions are divisible by 2, 1/3 of numbers are 
 divisible by 3, and so on, the number of primality tests required to be 
 still in polynomial in $N$, which is exponential in input size, i.e., 
 $\log(N)$.

\end{document}